\theoremstyle{definition}
\newtheorem{theorem}{Theorem}
\newtheorem{lemma}{Lemma}
\newtheorem{prop}{Proposition}
\DeclareMathOperator{\is}{is}
\DeclareMathOperator{\sh}{sh}
\title{ Polynomiality of some hook-length statistics}
\author{Greta Panova }
\thanks{Harvard University, Mathematics Department}
\begin{document}

\begin{abstract}
We prove a conjecture of Okada giving an exact formula for a
certain statistic for hook-lengths of partitions:
\begin{equation*}
\frac{1}{n!} \sum_{\lambda \vdash n} f_{\lambda}^2 \sum_{u \in
\lambda} \prod_{i=1}^{r}(h_u^2 - i^2) = \frac{1}{2(r+1)^2} \binom{2r}{
r}\binom{2r+2}{ r+1} \prod_{j=0}^{r} (n-j),
\end{equation*}
where $f_{\lambda}$ is the number of standard Young tableaux of shape
$\lambda$ and $h_u$ is the hook length of the square $u$ of the Young
diagram of $\lambda$. We also obtain other similar formulas.
\end{abstract}
\maketitle
\textbf{MSC classes:}	 05E10 (Primary), 05A19

\textbf{Keywords:} Partition, Hook length, Standard Young tableaux, Plancherel measure, Longest increasing subsequence

\textbf{Journal:} \emph{The Ramanujan Journal}, \textsc{10.1007/s11139-011-9332-z}

\section{Introduction}

If $F$ is any symmetric function then define
\begin{equation}\label{poly}
\Phi_n(F) = \frac{1}{n!} \sum_{\lambda \vdash n} f_{\lambda}^2F(h_u^2:u\in \lambda),
\end{equation}
where the sum runs over all partitions $\lambda$ of $n$. Here $h_u$ denotes the hook length of the square $u$ in that partition and  $f_{\lambda}$ is the number of standard Young tableaux of shape $\lambda$, given by the hook-length formula,\cite{hookformula}, $\displaystyle f_{\lambda}=\frac{n!}{\prod_{u\in\lambda} h_u}.$ In \cite{StaHooks} Stanley proves that $\Phi_n(F)$ is a polynomial in  $n$. Following this theorem Soichi Okada conjectured an explicit formula (see \cite{StaHooks}).
\begin{theorem}\label{okadathm}(Okada's conjecture) For every integer $n \geq 1$ and every nonnegative integer $r$ we have that
\begin{equation}\label{okada_eq}
\frac{1}{n!} \sum_{\lambda \vdash n} f_{\lambda}^2 \sum_{u \in \lambda} \prod_{i=1}^{r}(h_u^2 - i^2) = \frac{1}{2(r+1)^2} \binom{2r}{ r}\binom{2r+2}{ r+1}  \prod_{j=0}^{r} (n-j).
\end{equation}
\end{theorem}
 The current note is devoted to proving this equation and similar results. In doing so we also prove a conjecture by G. Han from \cite{HanConj} and generalize his ``marked hook formula'' from \cite{HanON}.

\section{ Proof of Okada's conjecture }
Let $P_r(n) = \frac{1}{n!} \sum_{\lambda \vdash n} f_{\lambda}^2 \sum_{u \in \lambda} \prod_{i=1}^{r}(h_u^2 - i^2)$. Since $F_r(x_1,\dots,x_n) = \sum_{i=1}^{n} \prod_{j=1}^r (x_i^2 - j^2)$ is clearly symmetric in the variables $x_1,\dots,x_n$, we see that $P_r(n) = \frac{1}{n!} \sum_{\lambda \vdash n} f_{\lambda}^2 F_r(h_u^2:u\in \lambda)$ and so by \cite{StaHooks} it is a polynomial in $n$. In order to prove \eqref{okada_eq} then it suffices to show that the degree of the polynomial is less than or equal to $r+1$, and exhibit  \eqref{okada_eq} for $r+2$ values of $n$.

\begin{lemma}\label{values}
The values $1,\dots,r$ are roots of $P_r(n)$. For the values at $r+1$ and $r+2$ we have $$P_r(r+1) = \frac{1}{2(r+1)^2} \binom{2r}{ r}\binom{2r+2}{ r+1} (r+1)!\; , \quad P_r(r+2) = \frac{1}{2(r+1)^2} \binom{2r}{ r}\binom{2r+2}{ r+1} (r+2)!.$$
\end{lemma}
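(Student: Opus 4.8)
The plan rests on the elementary observation that $\prod_{i=1}^r(h_u^2-i^2)=\prod_{i=1}^r(h_u-i)(h_u+i)$ vanishes whenever $h_u\in\{1,\dots,r\}$, so that only boxes $u$ with $h_u\ge r+1$ ever contribute to $P_r(n)$. Since the largest hook length of any $\lambda\vdash n$ is $h_{(1,1)}=\lambda_1+\ell(\lambda)-1\le n$ (the first row and first column together meet at most $n$ boxes, with equality exactly for hook shapes), this severely restricts the surviving pairs $(\lambda,u)$ when $n$ is close to $r$. In particular, for $1\le n\le r$ every hook length is $\le n\le r$, so every product vanishes and $P_r(n)=0$; this gives the roots $1,\dots,r$.

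For $n=r+1$ a surviving box needs $h_u\ge r+1=n$, forcing $h_u=n$, which occurs only at the corner $(1,1)$ of a hook $\lambda=(a,1^{n-a})$. There the product equals the constant $\prod_{i=1}^r((r+1)^2-i^2)=r!\,(2r+1)!/(r+1)!$, and since $f_{(a,1^{n-a})}=\binom{n-1}{a-1}$ we have $\sum_{\text{hooks}}f_\lambda^2=\sum_{k=0}^{n-1}\binom{n-1}{k}^2=\binom{2r}{r}$. Multiplying these and dividing by $(r+1)!$ gives the stated value of $P_r(r+1)$ after a short factorial simplification.

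For $n=r+2$ a surviving box has $h_u\in\{n-1,n\}$. The boxes with $h_u=n$ again come only from corners of hooks and contribute $\binom{2r+2}{r+1}$ times the constant $\prod_{i=1}^r((r+2)^2-i^2)=(2n-2)!/n$. For $h_u=n-1$ I will first classify the relevant pairs: such a box leaves exactly one box of $\lambda$ outside its hook, and analysing the hook-complement shows this happens only for the row $(n)$ at box $(1,2)$, the column $(1^n)$ at box $(2,1)$, and the near-hooks $\lambda=(m,2,1^{n-m-2})$, $2\le m\le n-2$, at the corner $(1,1)$, each carrying a unique such box. The hook-length formula gives $f_{(m,2,1^{n-m-2})}=\dfrac{n!\,(m-1)(n-m-1)}{(n-1)\,m!\,(n-m)!}$, so the total weight of the $h_u=n-1$ boxes is $2+S$ with $S=\sum_{m=2}^{n-2}f_{(m,2,1^{n-m-2})}^2$. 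Using $(m-1)(n-m-1)\binom{n}{m}=(n-1)\bigl(n\binom{n-2}{m-1}-\binom{n}{m}\bigr)$ I will rewrite $S=\sum_{m=2}^{n-2}\bigl(n\binom{n-2}{m-1}-\binom{n}{m}\bigr)^2$ and evaluate its three pieces by Vandermonde's identity (with small boundary corrections) to obtain $2+S=n^2\binom{2n-4}{n-2}-2n\binom{2n-2}{n-1}+\binom{2n}{n}$. Combining the two hook-length contributions, together with $\prod_{i=1}^r((r+1)^2-i^2)=(2n-3)!/(n-1)$, and simplifying the factorials yields the stated value of $P_r(r+2)$.

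The main obstacle is the $P_r(r+2)$ step, and it is twofold. First, one must prove that the three families above are exactly the shapes carrying a box of hook length $n-1$ and that each carries only one, which I will handle through the "one box outside the hook" count. Second comes the closed-form evaluation of $S$, where the decisive device is the linearization $(m-1)(n-m-1)\binom{n}{m}=(n-1)\bigl(n\binom{n-2}{m-1}-\binom{n}{m}\bigr)$ that replaces the awkward quadratic summand by something Vandermonde can digest. By contrast, the roots and the value $P_r(r+1)$ fall out immediately once the vanishing observation is recorded.
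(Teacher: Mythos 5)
Your proposal is correct and follows essentially the same route as the paper: the vanishing of $\prod_{i=1}^r(h_u^2-i^2)$ for $h_u\le r$, the reduction to hooks for $n=r+1$, and for $n=r+2$ the split into hooks (boxes of hook length $r+2$, plus the extreme row/column contributing a box of hook length $r+1$) and near-hooks $(m,2,1^{n-m-2})$, with the same linearization $f_{(m,2,1^{n-m-2})}=n\binom{n-2}{m-1}-\binom{n}{m}$ and the same Vandermonde evaluation with boundary corrections. The only cosmetic difference is that you organize the count by the hook length of the surviving box rather than by the value of $h_{(1,1)}$ of the shape, which yields identical sums.
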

\begin{proof} If $1\leq n \leq r$ we have for every $\lambda \vdash n$ and every $u \in \lambda$ that $1\leq h_u \leq |\lambda| = n \leq r$, and so $\prod_{i=1}^r(h_u^2 -i^2) = (h_u^2 -1^2)\cdots(h_u^2 - h_u^2) \cdots (h_u^2 -r^2) = 0$. Hence for $n=1,\dots,r$ we get 
 $$P_r(n) = \frac{1}{n!} \sum_{\lambda \vdash n} f_{\lambda}^2 \sum_{u \in \lambda} \prod_{i=1}^{r}(h_u^2 - i^2) = \frac{1}{n!} \sum_{\lambda \vdash n} f_{\lambda}^2 \sum_{u \in \lambda} 0 =0.$$ 

Now let $n=r+1$. Let $\lambda \vdash r+1$ and consider the largest hook length in $\lambda$, that is, $h_{(1,1)} = \lambda_1 +\ell(\lambda)-1$, where $\ell(\lambda)$ denotes the number of parts of $\lambda$. If $h_{(1,1)} \leq r$, then for every $u \in \lambda$ we would have $h_u \leq h_{(1,1)} \leq r$ and as in the previous paragraph we will have $\sum_{u \in \lambda} \prod_{i=1}^{r}(h_u^2 - i^2) =0$. 
When $h_{(1,1)}\geq r+1$ we must have $h_{(1,1)}=r+1$ and all of $\lambda$ be within that hook, so $\lambda = (a+1,\underbrace{1,\dots,1}_{r-a})$ for some $r\geq a\geq 0$.
%
For such $\lambda$ we have by the hook-length formula, or by a simple bijection with subsets of $\{2,\dots,r+1\}$ of $a$ elements for the entries in $(1,2),\dots,(1,a+1)$ of standard tableaux of shape $\lambda$, that $f_{\lambda}=\frac{(r+1)!}{(r+1)a!(r-a)!} = \binom{r}{a}$. We also have that the only square $u$ with hook length greater than $r$ is $(1,1)$, and for it we have $\prod_{j=1}^r(h_{(1,1)}^2-j^2) = \prod_{j=1}^r(r+1-j)\prod_{j=1}^r(r+1+j)= \frac{(2r+1)!}{r+1}$. Thus we can compute
\begin{align*}
P_r(r+1) & =   \frac{1}{(r+1)!} \sum_{a=0}^r f_{(a+1,1,\dots,1)}^2 \frac{(2r+1)!}{r+1} \\
 &= \frac{1}{2(r+1)^2} \binom{2r+2}{r+1} (r+1)! \sum_{a=0}^r \binom{r}{a}^2 = \frac{1}{2(r+1)^2} \binom{2r}{r}\binom{2r+2}{r+1} (r+1)!,
\end{align*}
which also agrees with \eqref{okada_eq}.

Computing $P_r(r+2)$ is slightly more complicated, because there are two kinds of shapes $\lambda$ which contain squares of hook length at least $r+1$. Since the largest hook length is $h_{(1,1)}$ we need to consider the cases $h_{(1,1)}=r+2$ and $h_{(1,1)}=r+1$. The first one implies that $\lambda$ is a hook, i.e. $(a+1,1,\ldots,1)$, and the only hook of length at least $r+1$ is at $(1,1)$ unless $a=0$ or $a=r+1$, when there are additional hooks of length $r+1$ at $(2,1)$ and $(1,2)$ respectively. Hence the contribution to $P_r(r+2)$ will be
\begin{align}
\begin{aligned}
\frac{1}{(r+2)!}\frac{(2r+2)!}{r+2} \sum_{a=0}^{r+1} &\binom{r+1}{a}^2 + \nonumber \\
&+ \frac{f_{(1,1,1,\dots,1)}^2}{(r+2)!} \prod_{j=1}^{r}((r+1)^2-j^2) + \frac{f_{(r+2)}^2}{(r+2)!} \prod_{j=1}^{r}((r+1)^2-j^2)\nonumber
\label{hook r+2}
\end{aligned}\\
 = \frac{(2r+2)!}{(r+2)(r+2)!} \binom{2r+2}{r+1} + 2\frac{(2r+1)!}{(r+1)(r+2)!}.
\end{align}

Next, if $h_{(1,1)}=r+1$ then $\lambda$ must necessarily be $(a+2,2,1,\dots,1)$ for some $a\in [0,\dots,r-2]$. In this case $h_{(1,1)}=r+1$ and all other hook lengths are less than $r+1$, so contribute 0 to $F_r$. Hence $F_r = \prod_{j=1}^{r}((r+1)^2-j^2)$.
We have by the hook-length formula and some algebraic manipulations of binomial coefficients that $$f_{(a+2,2,1,\dots,1)} = \frac{(r+2)!}{(r+1)(a+2)a! (r-a)(r-a-2)!} = (r+2)\binom{r}{a+1} - \binom{r+2}{a+2}. $$

Now we can compute the contribution of such partitions to the sum in $P_r(r+2)$ as
\begin{align}
\frac{1}{(r+2)!} &\sum_{a= 0}^{r-2} f_{(a+2,2,1,\dots,1)}^2 \prod_{j=1}^{r}((r+1)^2-j^2) \nonumber \\
&=\frac{(2r+1)!}{(r+2)!(r+1)}\sum_{a=0}^{r-2} \left( (r+2)\binom{r}{a+1} -\binom{r+2}{a+2} \right)^2  \nonumber \\
\label{hook r+1}
&=\frac{(2r+1)!}{(r+2)!(r+1)}  \left( (r+2)^2 \binom{2r}{r} - 2(r+2) \binom{2r+2}{r+1} + \binom{2r+4}{r+2} - 2 \right).
\end{align}
Finally, we obtain $P_r(r+2)$ as the sum of \eqref{hook r+2} and \eqref{hook r+1}. After some algebraic manipulations we get the desired
\begin{equation*}
P_r(r+2) =
\binom{2r}{r}\binom{2r+2}{r+1} \frac{1}{2(r+1)^2} (r+2)!.\qedhere
\end{equation*}
\end{proof}

\begin{lemma}\label{degree}
Let $\displaystyle R_k(n) = \frac{1}{n!}\sum_{\lambda \vdash n} f_{\lambda}^2 \sum_{u \in \lambda}h_u^{2k}$, then we have $\deg P_k(n)\leq \deg R_k(n) \leq k+1$ as polynomials in $n$.
\end{lemma}
\begin{proof}   The idea for this proof is suggested by Richard Stanley. The point is to use the bijection given by the RSK algorithm between pairs of standard Young tableaux $(P,Q)$ of same shape $\lambda \vdash n$ and permutations $w \in S_n$ (see for example \cite{EC2}), together with some permutation statistics. We are going to show that $0\leq \lim_{n \rightarrow \infty} \frac{R_k(n)}{n^{k+1}} < \infty$.

By the fact that the number of pairs $(P,Q)$ of SYT's of the same shape $\sh(P)=\sh(Q)=\lambda \vdash n$ is $f_{\lambda}^2$ and then by the RSK algorithm between such pairs and permutations of $n$ letters, we can rewrite $R_k(n)$ as
\begin{align}
R_k(n) &=
 \frac{1}{n!} \sum_{\lambda \vdash n} \sum_{\substack{(P,Q),\\ \sh(P)=\sh(Q)=\lambda}
} \sum_{u\in \lambda}h_u^{2k}  \nonumber \\
&= \frac{1}{n!} \sum_{\substack{(P,Q),\\ \sh(P)=\sh(Q) \vdash n}} \sum_{u \in \sh(P)} h_u^{2k} 
= \frac{1}{n!} \sum_{w \in S_n} \sum_{u \in \sh(w) } h_u^{2k},
\end{align}
where $\sh(w)$ denotes the shape of the SYTs obtained from $w$ by the RSK algorithm, i.e., if $(P_w,Q_w) = RSK(w)$, then $\sh(w) = \sh(P_w)=\sh(Q_w)$.

 We have that $h_{(1,1)} = \lambda_1 +\lambda'_1 -1$. Since for any $u\in \lambda$, $h_u \leq h_{(1,1)}$, and since for any $x,y\geq 0$, we have $(x+y)^m\leq (\max(x,y)+\max(x,y))^m=2^m(\max(x,y))^m \leq 2^mx^m+2^my^m$,  we have also that
$$h_u^{2k} \leq (\lambda_1+\lambda_1' -1)^{2k} \leq 2^{2k}\lambda_1^{2k} + 2^{2k} (\lambda_1')^{2k}.$$
 By Schensted's theorem, \cite{EC2}*{Cor.7.23.11}, we have that $\lambda_1 =\is(w)$, where $\is(w)$ denotes the length of the longest increasing subsequence of $w$.
 Hence $R_k(n)$ can be bounded as follows:
\begin{align}\label{bound}
R_k(n) &=
\frac{1}{n!} \sum_{w \in S_n} \sum_{u \in \sh(w)} h_u^{2k}
 \leq \frac{1}{n!} \sum_{\lambda \vdash n} f_{\lambda}^2 n(2^{2k}\lambda_1^{2k} + 2^{2k} (\lambda'_1)^{2k}) \nonumber \\
&= \frac{1}{n!} n2^{2k} \sum_{\lambda \vdash n} f_{\lambda}\lambda_1^{2k} +\frac{1}{n!} n2^{2k} \sum_{\lambda' \vdash n} f_{\lambda'} (\lambda'_1)^{2k} \nonumber\\
&= 2^{2k+1}n \frac{1}{n!} \sum_{\lambda \vdash n} f_{\lambda}\lambda_1^{2k}
 = 2^{2k+1}n\frac{1}{n!} \sum_{w\in S_n} \is(w)^{2k},
\end{align}
where we also used the obvious fact that $f_{\lambda'}=f_{\lambda}$, so that the sums over $\lambda$ and $\lambda'$ become equal.

Now that we have bounded $R_k(n)$ by sums involving only permutations, we can apply some permutations statistics to obtain bounds for these sums. In \cite{Hammer}*{Theorem 4} Hammersley proves that for uniformly distributed $w\in S_n$, the value $\is(w)/\sqrt{n}$ converges to a constant $c$ in probability and also in $L_p$ norm for any $p$. In other words for any $p>0$ there is a constant $E_p$ such that
\begin{equation} \label{expected}
\lim_{n \rightarrow \infty} \sum_{w \in S_n} \frac{1}{n!} \left( \frac{\is(w)}{\sqrt{n}} \right)^p = E_p.
\end{equation}
Thus the $L_p$ norm (also called $p^{\text{th}}$ moment) of $\frac{\is(w)}{\sqrt{n}}$ is bounded. In other words for any nonnegative $k$ there is a constant $M_k$ such that
$$\frac{1}{n^{k/2}}\sum_{w \in S_n} \frac{1}{n!}\is(w)^k<M_k.$$
By this fact and by the bounds in \eqref{bound} we see that
\begin{align*}
R_k(n) \leq 2^{2k+1}n \sum_{w \in S_n} \frac{1}{n!} \is(w)^{2k} \leq 2^{2k+1}n M_{2k} n^k = 2^{2k+1}M_{2k} n^{k+1},
\end{align*}
so that we must necessarily have that $\deg R_k(n) \leq k+1$ for every $k$.  Since $\prod_{j=1}^{k}(h_u^2 - j^2) \leq h_u^{2k}$ we have that $P_k(n) \leq R_k(n)$, so $\deg P_k(n) \leq \deg R_k(n)$, and in particular $\deg P_k(n) \leq k+1$. 
\end{proof}
\begin{proof}[Proof of Theorem \ref{okadathm}]
In Lemma \ref{values} we showed that $P_k(n)$ coincides with the polynomial in $n$ $$\frac{1}{2(k+1)^2} \binom{2k}{k} \binom {2k+2}{k+1} \prod_{j=0}^{k}(n-j)$$ of degree $k+1$ at $k+2$ values, so since $\deg P_k(n) \leq k+1$ the two polynomials should agree. Hence we have that
$$ P_k(n) = \frac{1}{2(k+1)^2} \binom{2k}{k} \binom {2k+2}{k+1} \prod_{j=0}^{k}(n-j), $$
proving Okada's conjecture \eqref{okada_eq}. 
\end{proof}

This theorem  shows, in particular, that $\deg P_k(n) = k + 1$ and so from Lemma \ref{degree} we must have $\deg R_k(n) = k+1$, proving a conjecture of Han, \cite{HanConj}*{Conjecture 3.1}.

We observe now that Okada's conjecture Theorem~\ref{okadathm} gives us a formula for $\Phi_n(p_k)$, where $p_k$ are the power sum symmetric functions given by $p_k(x_1,\dots,x_n) = x_1^k+\dots+x_n^k$, or in other words a formula for $R_k(n)$. 
We will express $p_k(x)=x^k$ as a linear combination of $q_i(x) =\prod_{j=1}^{i} (x - j^2)$ as follows.
Consider the central factorial numbers $T(k,i)$(see exercise 5.8 in \cite{EC2}), given by
$T(k,i) = i^2 T(k-1,i) + T(k-1,i-1)$ and $T(0,0)=1$, $T(i,j)=0$ if $i=0,j>0$ or $i> 0,j=0$. We have 
\begin{align}\label{p_rec}
p_k =\sum_{i=0}^kT(k+1,i+1)q_i,
\end{align}
since by induction on $k$ we get
\begin{align*}
p_{k+1} &= xp_k = \sum_{i=0}^kT(k+1,i+1)xq_i = 
\sum_{i=0}^kT(k+1,i+1)(x-(i+1)^2 +(i+1)^2)q_i\\
&= \sum_{i=1}^{k+1}T(k+1,i)q_{i} + \sum_{i=0}^k (i+1)^2T(k+1,i+1)q_i =\sum_{i=0}^{k+1}T(k+2,i+1)q_i.
\end{align*}
%
Equations \eqref{p_rec} and \ref{okada_eq} give the following proposition which generalizes Han's "marked hook formula" for $\Phi_n(p_1)$, \cite{HanON}*{Theorem 1.5}.
\begin{prop}
For $\Phi_n(p_k)$ we have that
\begin{align*}
\Phi_n(p_k) = \frac{1}{n!} \sum_{\lambda \vdash n} f_{\lambda}^2 \sum_{u \in \lambda} h_u^{2k} =&
\sum_{i=0}^k T(k+1,i+1) \Phi_n(q_i) = \\
&\sum_{i=0}^k T(k+1,i+1) \frac{1}{2(i+1)^2} \binom{2i}{i} \binom{2i+2}{i+1} (i+1)! \binom{n}{i+1}.
\end{align*}
\end{prop}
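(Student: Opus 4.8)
The plan is to exploit the linearity of the functional $F \mapsto \Phi_n(F)$ together with the change-of-basis identity $p_k = \sum_{i=0}^{k} T(k+1,i+1)\, q_i$ established in the discussion preceding the proposition, and then to substitute the closed form for $\Phi_n(q_i)$ supplied by Okada's conjecture.

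First I would observe that for each fixed partition $\lambda$ the inner sum $\sum_{u\in\lambda} h_u^{2k}$ is obtained by evaluating the one-variable polynomial $p_k(x)=x^k$ at $x=h_u^2$ and summing over the squares $u\in\lambda$. Since $p_k = \sum_{i=0}^{k} T(k+1,i+1)\, q_i$ holds as an identity of polynomials in $x$, I may evaluate it termwise at $x=h_u^2$ and sum, obtaining $\sum_{u\in\lambda} h_u^{2k} = \sum_{i=0}^{k} T(k+1,i+1) \sum_{u\in\lambda} q_i(h_u^2)$, where $q_i(h_u^2) = \prod_{j=1}^{i} (h_u^2 - j^2)$. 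Next I would multiply by $f_\lambda^2/n!$ and sum over all $\lambda\vdash n$. Because the coefficients $T(k+1,i+1)$ depend on neither $\lambda$ nor $u$, they factor out of the double sum, yielding $\Phi_n(p_k) = \sum_{i=0}^{k} T(k+1,i+1)\,\Phi_n(q_i)$. This is exactly the first claimed equality; it is nothing more than the linearity of $\Phi_n$ applied to the expansion of $p_k$ in the basis of the $q_i$.

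Finally I would invoke Okada's conjecture, now proved, which identifies $\Phi_n(q_i)$ with $P_i(n) = \frac{1}{2(i+1)^2}\binom{2i}{i}\binom{2i+2}{i+1}\prod_{j=0}^{i} (n-j)$. The only remaining step is the elementary identity $\prod_{j=0}^{i} (n-j) = n(n-1)\cdots(n-i) = (i+1)!\binom{n}{i+1}$, which rewrites the product in the stated binomial form and produces the second equality, completing the proof.

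I do not expect a genuine obstacle here: the entire content of the argument resides in the two results it quotes, namely the evaluation of $\Phi_n(q_i)$ from Okada's conjecture and the determination that the transition coefficients $A(k,i)$ equal the central factorial numbers $T(k+1,i+1)$. If any point warrants care, it is confirming that $p_k=\sum_{i=0}^{k} T(k+1,i+1)\, q_i$ is a true polynomial identity, so that termwise evaluation at each $x=h_u^2$ is legitimate; but this was already secured by matching the recurrence $A(k+1,i)=A(k,i-1)+(i+1)^2 A(k,i)$ against the central-factorial recurrence $T(n,k)=k^2T(n-1,k)+T(n-1,k-1)$ with the boundary conditions $A(k,k)=1$ and $A(k,i)=0$ for $i>k$.
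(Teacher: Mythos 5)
Your argument is correct and is essentially identical to the paper's: expand $p_k=\sum_{i=0}^k T(k+1,i+1)\,q_i$ (justified via the recurrence for the coefficients $A(k,i)$ and the central factorial numbers), apply the linearity of $\Phi_n$, and substitute the closed form $\Phi_n(q_i)=\frac{1}{2(i+1)^2}\binom{2i}{i}\binom{2i+2}{i+1}(i+1)!\binom{n}{i+1}$ furnished by the now-proved Okada conjecture. No gaps.
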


We will now exhibit a  more general  upper bound for the degree of $\Phi_n(p_{\mu})$, where $\mu = (\mu_1,\dots,\mu_j)\vdash k$ with $\mu_j \neq 0$ and we use the power sum symmetric function $p_{\mu}$ as $F$.
In this case we have that
\begin{align*}
\Phi_n(p_{\mu}) &= \frac{1}{n!} \sum_{\lambda \vdash n} f_{\lambda}^2 \left( \sum_{u \in \lambda} h_u^{2\mu_1} \right) \dots \left( \sum_{ u \in \lambda} h_u^{2\mu_j} \right) \\
&\leq \frac{1}{n!} \sum_{\lambda \vdash n} f_{\lambda}^2 ( n \max( h_u: u \in \lambda)^{2\mu_1})\cdots (n \max( h_u: u \in \lambda)^{2\mu_j}) \\
&=n^j \frac{1}{n!} \sum_{\lambda \vdash n} f_{\lambda}^2 \max( h_u: u \in \lambda)^{2k}\leq n^jR_k(n).
\end{align*}
From Lemma \ref{degree} we have $\deg R_k(n)\leq k+1$, so we get that $$\deg \Phi_n(p_{\mu}) \leq j+k .$$

\section{Other similar results}

Consider now the case of $F = e_k$, where $e_k$ is the elementary symmetric function given by $e_k(x_1,\dots,x_n) = \sum_{1\leq i_1 <i_2<\dots < i_k \leq n} x_{i_1}x_{i_2}\dots x_{i_n}$. We will show how to find a formula for $\Phi_n(e_k)$. The point is to use the Okounkov-Nekrasov hook length formula \cite{HanON},
\begin{equation}\label{on}
\sum_{n \geq 0} \sum_{\lambda \vdash n}x^n \prod_{u \in \lambda} \left(1 - \frac{z}{h_u^2} \right) =
\prod_{k\geq 1} (1 - x^k)^{z-1}.
\end{equation}
We should point out that the same approach has already been used by Han in \cite{HanEu} to derive the cases for $e_1$ and $e_2$ and the following is an extension of his results.

If we make the substitution $1/z = t$ and $y = x/t$ and expand the product over $u$ in the left-hand side of \eqref{on} we obtain
\begin{equation}
\sum_{n \geq 0} \sum_{\lambda \vdash n} y^n \prod_{u \in \lambda} \frac{1}{h_u^2} \left(\sum_{j = 0}^{n} e_{j}(\{h_u^2:u \in \lambda\})t^j(-1)^{n-j} \right) = \prod_{k \geq 1} \left(1 - (yt)^k\right)^{1/t -1}.
\end{equation}

Substituting $\frac{1}{n!}\sum_{\lambda \vdash n} f_{\lambda}^2 e_j(h_u:u \in \lambda)$ with $\Phi_n(e_j)$ we get
\begin{equation}\label{on2}
\sum_{n \geq 0} \frac{y^n}{n!} \sum_j (-1)^{n-j}\Phi_n(e_j)t^j = \prod_{k \geq 1} (1-(yt)^k)^{1/t -1}.
\end{equation}
So the value of $\Phi_n(e_j)$ is $(-1)^{n-j}n!$ times the coefficient of $y^nt^j$ from the right hand side of \eqref{on2}. We will now expand the right-hand side in a convenient form as follows
\begin{align}\label{on_rhs}
\prod_{k \geq 1} (1-(yt)^k)^{1/t -1} &=
 \exp\left( \left(1 -1/t \right) \left( \sum_{k \geq 1} -\log(1-(yt)^k)\right)\right) \nonumber \\
 &= \exp\left(\left( 1 - 1/t \right) \left( \sum_{k \geq 1, i \geq 1} (yt)^{ki}\right) \right) 
= \exp\left( \left(1-1/t\right) \left( \sum_{m \geq 1} (yt)^m \tau(m)\right) \right) \nonumber \\
&= \sum_{u \geq 0} \frac{\left(1-1/t\right)^u}{u!} \sum_{m_1,\dots,m_{u} \geq 1} (yt)^{m_1+\cdots+m_u} \tau(m_1)\cdots \tau(m_u) ,
\end{align}
where $\tau(m)$ is the number of divisors of $m$. Restricting \eqref{on_rhs} to the coefficient at $y^n$ is equivalent to imposing the condition $m_1+\dots+m_u=n$, then restricting further to $t^j$ is equivalent to taking only the term at $\left(\frac{1}{t}\right)^{n-j}$ from $(1-1/t)^u$, so we get that
\begin{equation}
\Phi_n(e_j) = n!(-1)^{n-j} \sum_{u = 0}^{ n}\frac{1}{u!} \binom{u}{n-j} (-1)^{n-j} \sum_{\substack{m_1+\cdots+m_u = n,\\ m_i \geq 1}} \tau(m_1)\cdots \tau(m_u).
\end{equation}
Notice that in order for $\binom{u}{n-j} \neq 0$ we would need $ u \geq n-j$, so we can write $q = n -u$, going from $0$ to $j$, and then further substitute $m_i = a_i +1$, $a_i \geq 0$, so that $\sum_{i = 1}^{n-q} a_i = q$. Thereby we get that
\begin{align*}
\Phi_n(e_j) = \sum_{q = 0}^{j} \frac{n!}{(n-j)! (j-q)!} \sum_{\substack{a_1+\cdots+a_{n-q} = q, \\ a_i \geq 0}} \tau(a_1+1)\cdots\tau(a_{n-q}+1).
\end{align*}
Notice that the unordered solutions $(a_1,\dots,a_{n-q})$ of $a_1+\dots+a_{n-q}=q$ are in bijection with the choice of $p \leq q$ of the $a_i$s to be nonzero. If we label those nonzero $a_i$s by $b_k$s for $k=1,\dots,p$ we obtain the following
\begin{prop}
\begin{align*}
\Phi_n(e_j)= \binom{n}{j} \sum_{q = 0}^{j} \frac{j!}{(j-q)!} \sum_{p=0}^{q} \binom{n-q}{p} \sum_{b_1+\cdots+b_p = q, b_i \geq 1} \tau(b_1+1)\cdots \tau(b_p+1).
\end{align*}
\end{prop}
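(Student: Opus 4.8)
The plan is to finish the combinatorial reorganization that the preceding computation has already set up, so that all of the analytic input (the Okounkov--Nekrasov formula and the divisor-function expansion of its right-hand side) is already in hand. Starting from the formula
$$\Phi_n(e_j) = \sum_{q=0}^{j} \frac{n!}{(n-j)!\,(j-q)!} \sum_{\substack{a_1+\cdots+a_{n-q}=q\\ a_i\geq 0}} \tau(a_1+1)\cdots\tau(a_{n-q}+1)$$
derived just above, the only remaining work is to repackage the inner sum over ordered tuples $(a_1,\dots,a_{n-q})$ of nonnegative integers and then to tidy up the prefactor.

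First I would fix $q$ and classify the ordered tuples $(a_1,\dots,a_{n-q})$ summing to $q$ according to the number $p$ of entries that are nonzero. Two observations drive the rewriting. Any entry $a_i=0$ contributes the trivial factor $\tau(0+1)=\tau(1)=1$ to the product, so only the nonzero entries matter; and a tuple with exactly $p$ nonzero entries is specified by first choosing which $p$ of the $n-q$ positions carry nonzero values and then assigning to those positions an ordered $p$-tuple of \emph{positive} integers summing to $q$. Since $\tau(a_1+1)\cdots\tau(a_{n-q}+1)$ depends only on the values placed and not on which positions hold them, each of the $\binom{n-q}{p}$ choices of positions yields the identical inner sum over compositions of $q$ into $p$ positive parts. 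Letting $p$ run from $0$ to $q$ (larger $p$ being impossible, and in any case annihilated by $\binom{n-q}{p}=0$ once $p>n-q$) therefore gives
$$\sum_{\substack{a_1+\cdots+a_{n-q}=q\\ a_i\geq 0}} \tau(a_1+1)\cdots\tau(a_{n-q}+1) = \sum_{p=0}^{q} \binom{n-q}{p} \sum_{\substack{b_1+\cdots+b_p=q\\ b_i\geq 1}} \tau(b_1+1)\cdots\tau(b_p+1).$$

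Finally I would clean up the prefactor using $\frac{n!}{(n-j)!\,(j-q)!} = \binom{n}{j}\frac{j!}{(j-q)!}$; the factor $\binom{n}{j}$ is free of $q$ and can be pulled outside the $q$-summation, and substituting this together with the displayed identity produces exactly the claimed expression. The one point that demands genuine care --- and the only place a slip could enter --- is the bookkeeping in the reorganization step: one must confirm that the sum over nonnegative ordered tuples factors as (choice of nonzero positions) $\times$ (composition of $q$ into positive parts), that $\binom{n-q}{p}$ counts position-choices without overcounting the compositions, and that no contribution is dropped because distinct positionings give the same value of the $\tau$-product. Once this is verified the remaining manipulation is purely formal.
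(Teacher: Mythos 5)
Your proposal is correct and follows essentially the same route as the paper: the paper likewise passes from the formula with sums over nonnegative tuples $(a_1,\dots,a_{n-q})$ to the stated identity by grouping tuples according to the $p$ nonzero entries (using $\tau(1)=1$), choosing their positions in $\binom{n-q}{p}$ ways, and relabeling the nonzero entries as a composition of $q$ into positive parts. Your treatment of the bookkeeping (that the $\tau$-product is independent of which positions carry the nonzero values) is in fact spelled out more carefully than in the paper's one-line justification.
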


As expected, $\Phi_n(e_j)$ is indeed a polynomial in $n$ for any $j$.

\begin{bibdiv}
\begin{biblist}
\bib{hookformula}{article}{
   author={Frame, J. S.},
   author={Robinson, G. de B.},
   author={Thrall, R. M.},
   title={The hook graphs of the symmetric groups},
   journal={Canadian J. Math.},
   volume={6},
   date={1954},
   pages={316--324},
}

\bib{Hammer}{article}{
   author={Hammersley, J. M.},
   title={A few seedlings of research},
   conference={
      title={Proc. Sixth Berkeley Symp. on Math. Statist. and Prob., Vol. 1},
      address={Univ. California, Berkeley, Calif.},
      date={1970/1971},
   },
   book={
      publisher={Univ. California Press},
      place={Berkeley, Calif.},
   },
   date={1972},
   pages={345--394},
}

\bib{HanEu}{article}{
    author = {Han, Guo-Niu},
    title = {An explicit expansion formula for the powers of the Euler Product in
terms of partition hook lengths},
    journal = {	arXiv:0804.1849v3 },
    year={2008},
}

\bib{HanConj}{article}{
   author={Han, Guo-Niu},
   title={Some conjectures and open problems on partition hook lengths},
   journal={Experiment. Math.},
   volume={18},
   date={2009},
   number={1},
   pages={97--106},
}

\bib{HanON}{article}{
   author={Han, Guo-Niu},
   title={The Nekrasov-Okounkov hook length formula: refinement, elementary
   proof, extension and applications},
   journal={Ann. Inst. Fourier (Grenoble)},
   volume={60},
   date={2010},
   number={1},
   pages={1--29},
}

\bib{EC2}{book}{
   author={Stanley, Richard P.},
   title={Enumerative combinatorics. Vol. 2},
   series={Cambridge Studies in Advanced Mathematics},
   volume={62},
   publisher={Cambridge University Press},
   place={Cambridge},
   date={1999},
}

\bib{StaPerm}{article}{
   author={Stanley, Richard P.},
   title={Increasing and decreasing subsequences and their variants},
   conference={
      title={International Congress of Mathematicians. Vol. I},
   },
   book={
      publisher={Eur. Math. Soc., Z\"urich},
   },
   date={2007},
   pages={545--579},
}

\bib{StaHooks}{article}{
   author = {Stanley, Richard P.},
   affiliation = {Massachusetts Institute of Technology Department of Mathematics Cambridge MA 02139 USA},
   title = {Some combinatorial properties of hook lengths, contents, and parts of partitions},
   journal = {The Ramanujan Journal},
   publisher = {Springer U.S.},
   issn = {1382-4090},
   keyword = {Mathematics and Statistics},
   pages = {1-15},
   url = {http://dx.doi.org/10.1007/s11139-009-9185-x},
   note = {10.1007/s11139-009-9185-x},
   year = {2009}
}

\end{biblist}
\end{bibdiv}

\end{document}